\numberwithin{equation}{section}
\newtheorem{theorem}{Theorem}[section]
\newtheorem{lemma}[theorem]{Lemma}
\theoremstyle{definition}
\newtheorem{definition}[theorem]{Definition}
\newtheorem{remark}[theorem]{Remark}
\newtheorem{example}[theorem]{Example}
\newtheorem{application}[theorem]{Application}
\newcommand{\pnpm}{\mathbb{P}^n \times \mathbb{P}^m}
\newcommand{\Iz}{I_{Z}}
\newcommand{\pr}{\mathbb{P}}
\newcommand{\supp}{\operatorname{Supp}}
\newcommand{\popo}{\mathbb{P}^1\times \mathbb{P}^1 }
\newcommand{\N}{\mathbb{N}}
\newcommand{\Ssz}{\mathcal S_{Z}}
\title{Separators of Arithmetically Cohen-Macaulay fat points in $\popo$}
\thanks{The second author acknowledges the support of NSERC; Updated: May 28, 2010}
\begin{document}
\author{Elena Guardo}
\address{Dipartimento di Matematica e Informatica\\
Viale A. Doria, 6 - 95100 - Catania, Italy}
\email{guardo@dmi.unict.it}

\author{Adam Van Tuyl}
\address{Department of Mathematical Sciences\\
Lakehead University, Thunder Bay, ON, P7B 5E1, Canada}
\email{avantuyl@lakeheadu.ca}

\keywords{separators, fat points, Cohen-Macaulay, Hilbert function}
\subjclass{13D40, 13D02, 14M05}

\begin{abstract}
Let $Z \subseteq \popo$ be a set of fat points that
is also arithmetically Cohen-Macaulay (ACM).  We
describe how to compute the degree of a separator
of a fat point of multiplicity $m$ for each point
in the support of $Z$ using only a numerical
description of $Z$.  Our formula  extends
the case of reduced points which was previously known.
\end{abstract}
\maketitle


\section{Introduction}

Fix an algebraically closed field $k$ of characteristic zero.
Let $R = k[x_0,x_1,y_0,y_1]$ be the $\N^2$-graded polynomial ring
with $\deg x_i = (1,0)$ for $i=0,1$ and $\deg y_i = (0,1)$ for
$i=0,1$.  The ring $R$ is the coordinate ring of $\popo$. Consider now
a set of points $X = \{P_1,\ldots,P_s\}\subset \popo$, and fix
positive integers $m_1,\ldots,m_s$.  The goal of this note is to
study some of the properties of the scheme $Z = m_1P_1 + \cdots +
m_sP_s$ of fat points (precise definitions are deferred until the
next section).  In particular, we are interested in describing the
separator of $P_i$ of multiplicity $m_i$.

Recall that for sets of points $X = \{P_1,\ldots,P_s\} \subseteq
\pr^n$, a homogeneous form $F \in k[\pr^n]$ is called a {\bf
separator} of $P \in X$ if $F(P) \neq 0$, but $F(Q) = 0$ for all $Q \in
X \setminus \{P\}$.  Over the years, a number of authors
have shown how to exploit information about
the separator of a point to describe properties of the set
of reduced points $X \subseteq \pr^n$ (e.g., see
\cite{ABM,B,BC,GKR,K,O,So}).
In a series of papers, the authors, along with
Marino, (see \cite{GMVT,GVT3}) generalized
some of these results by studying separators
of fat points, a family of non-reduced points.  Roughly
speaking, a {\bf separator of a point $P_i$ of multiplicity $m_i$}
and the {\bf degree of a point $P_i$ of multiplicity $m_i$} are
defined in terms of the generators of $I_{Z'}/I_Z$ in $R/I_Z$
where $I_{Z'}$ is the defining ideal of  $Z' = m_1P_1 +
\cdots + (m_i-1)P_i + \cdots + m_sP_s$.

General properties of separators of both reduced points and fat points
in a multiprojective space  were studied in \cite{GVT1,GVT2,GVT3, M1}.
We now specialize to the case of fat points in $\popo$.  By
restricting to this case, we can improve upon the results found
in \cite{GVT3}.  The main result of this paper (Theorem \ref{degP1xP1}) is to show how
to compute the degree of a point $P_i$ of multiplicity
$m_i$ directly from the combinatorics of the scheme, i.e., the
number of points on the various rulings of $\popo$ and the multiplicities,
provided that the scheme is ACM.  This result generalizes
the reduced case as found in
\cite[Theorem 7.4]{M1} and \cite[Theorem 4.4]{GVT2}.

\noindent {\bf Acknowledgments.}  We thank  Brian Harbourne for
his useful comments.


\section{Preliminaries and notation}

\subsection{ACM fat points in $\popo$}
Let $X$ be a set of $s$ distinct points in $\popo$. Let
$\pi_1:\popo \rightarrow \pr^1$ denote the projection morphism
defined by $P \times Q \mapsto P$.  Let $\pi_2:\popo \rightarrow
\pr^1$ be the other projection morphism. The set $\pi_1(X) =
\{P_1,\ldots,P_a\}$ is the set of $a \leq s$  distinct first
coordinates that appear in $X$.  Similarly, $\pi_2(X) =
\{Q_1,\ldots,Q_b\}$ is the set of $b \leq s$ distinct second
coordinates.  For $i = 1,\ldots,a$, let $L_{P_i}$ be the degree
$(1,0)$ form that vanishes at all the points with
first coordinate $P_i$. Similarly, for $j = 1,\ldots,b$, let
$L_{Q_j}$ denote the degree $(0,1)$ form that vanishes at points
with second coordinate $Q_j$.

Let $D:=\{(x,y) ~|~ 1 \leq x \leq a, 1 \leq y \leq b\}.$ If
$P \in X$, then $I_{P} = (L_{R_i},L_{Q_j})$ for some $(i,j) \in D.$ So,
we can write each point $P \in X$ as $P_i \times Q_j$ for
some $(i,j)\in D$. (Note: this does not mean that if $(i,j)
\in D$, then  $P_i \times Q_j \in X$;  there may exist a tuple $(i,j) \in D$,
but $P_i \times Q_j \not\in X$.)

Suppose that $X$ is a set of distinct points in $\popo$ and
$|\pi_1(X)| = a$ and $|\pi_2(X)| = b$. Let $I_{P_i \times Q_j} =
(L_{R_i},L_{Q_j})$ denote the ideal associated to the point
$P_i \times Q_j \in X$. For each $(i,j) \in D$, let $m_{ij}$ be a positive
integer if $P_i \times Q_j \in X$, otherwise, let $m_{ij} = 0$. Then we
denote by $Z$ the subscheme of $\popo$ defined by the saturated
bihomogeneous ideal
\[
\Iz = \bigcap_{(i,j) \in D} I_{P_i\times Q_j}^{m_{ij}}
\]
where $I_{P_i \times Q_j}^0 := (1)$. We say $Z$ is a {\bf fat point scheme}
or {\bf a set of fat points}
of $\popo$. The integer $m_{ij}$ is called the {\bf multiplicity} of
the point $P_i \times Q_j$. We shall sometimes denote the scheme
as $Z = \{ (P_i \times Q_j;m_{ij}) ~|~ (i,j) \in D\}$, or as
$Z = m_{11}(P_1 \times Q_1) + \cdots + m_{ab}(P_a \times Q_b)$.
The {\bf support} of $Z$,
written $\supp(Z)$, is the set of points $X$.

A fat point scheme is said to be {\bf arithmetically
Cohen-Macaulay} (ACM for short) if the associated coordinate ring
is Cohen-Macaulay.  We will need a classification of
ACM fat point schemes of $Z \subseteq \popo$ due to the first author
(see \cite{Gu2}).

We begin by recalling a construction of \cite{Gu2}.
Let $Z$ be a fat point scheme in $\popo$ where $Z =
\{(P_i \times Q_j ;m_{ij}) ~|~ 1 \leq i \leq a, ~1 \leq j \leq b\}$ with
$m_{ij} \geq 0$. For each $h \in \N$, and for each tuple $(i,j)$ with
$1 \leq i \leq a$ and $1 \leq j \leq b$, define
\[
t_{ij}(h) := (m_{ij}-h)_+ = \max\{0,m_{ij}-h\}.
\]
The set $\Ssz$ is then defined to be the set of $b$-tuples
\[
\Ssz=\{(t_{11}(h),\ldots,t_{1b}(h)),(t_{21}(h),\ldots,t_{2b}(h)),\ldots,
(t_{a1}(h),\dots,t_{ab}(h)) ~|~ h \in \N\}.
\]
The elements of $\Ssz$ belong to $\N^b$.  Let $\succeq$ denote
the partial order where $(i_1,\ldots,i_b) \succeq (j_1,\ldots,j_b)$
if and only if $i_\ell \geq j_\ell$ for all $\ell = 1,\ldots,b$.
Then we have \cite[Theorem 4.8]{Gu2}:

\begin{theorem} \label{equivalent}
Let $Z$ be a fat point scheme in $\popo$.  Then $Z$ is ACM
if and only if the elements of $\Ssz$ can be totally
ordered by $\succeq$, i.e., $\Ssz$ has no incomparable
elements.
\end{theorem}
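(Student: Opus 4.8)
The plan is to prove the equivalence by induction, peeling off one ruling of $\popo$ at a time and comparing the combinatorial data $\Ssz$ before and after the peeling with the bigraded Hilbert function of $R/\Iz$. Throughout I use that $R/\Iz$ has Krull dimension $2$, so it is Cohen--Macaulay precisely when $\depth R/\Iz=2$, and that the given expression $\Iz=\bigcap I_{P_i\times Q_j}^{m_{ij}}$ is a primary decomposition, so the associated primes of $R/\Iz$ are exactly the height-two primes $(L_{P_i},L_{Q_j})$ with $m_{ij}\ge 1$; in particular $R/\Iz$ is unmixed. The engine of the induction is the exact sequence obtained by multiplying by the degree $(1,0)$ form $L_{P_1}$ cutting out the ruling over $P_1$:
\[
0 \longrightarrow \bigl(R/(\Iz:L_{P_1})\bigr)(-1,0) \xrightarrow{\ \cdot L_{P_1}\ } R/\Iz \longrightarrow R/\bigl(\Iz+(L_{P_1})\bigr) \longrightarrow 0 .
\]

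The first substantive step is a residuation computation identifying the outer terms. From $(L_{P_1},L_{Q_j})^{m}:L_{P_1}=(L_{P_1},L_{Q_j})^{m-1}$, together with the fact that $L_{P_1}$ is regular modulo $I_{P_i\times Q_j}^{m_{ij}}$ for $i\ne 1$, one gets $\Iz:L_{P_1}=I_{Z'}$, where $Z'$ is the fat point scheme obtained from $Z$ by replacing each $m_{1j}$ with $(m_{1j}-1)_+$ and leaving all other multiplicities alone; and on the line $V(L_{P_1})\cong\pr^1$ the ideal $\Iz+(L_{P_1})$ cuts out the one-dimensional fat point scheme $\sum_{j=1}^b m_{1j}Q_j$, whose Hilbert function is classical. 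I would then match this with $\Ssz$: one checks directly that $t'_{ij}(h)=t_{ij}(h)$ for $i\ne 1$ and $t'_{1j}(h)=t_{1j}(h+1)$, so $\mathcal S_{Z'}$ is obtained from $\Ssz$ by deleting at most the single tuple $(m_{11},\dots,m_{1b})$. Hence $\Ssz$ is totally ordered whenever $\mathcal S_{Z'}$ is; and conversely, after a careful choice of which ruling to peel, any incomparable pair in $\Ssz$ persists into $\mathcal S_{Z'}$, using that $(t_{11}(1),\dots,t_{1b}(1))$ sits immediately below $(m_{11},\dots,m_{1b})$ in the partial order and so detects the same incomparability.

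Now I would run the induction on $\sum_{i,j}m_{ij}$, with $Z=\emptyset$ as the trivial base case. For the implication that $Z$ ACM forces $\Ssz$ to be totally ordered: a standard depth chase in the displayed sequence (using that $L_{P_1}$ lies in none of the associated primes of $R/\Iz$ coming from points off the ruling) shows $Z'$ is again ACM, so by induction $\mathcal S_{Z'}$ is totally ordered; it then remains to see that the deleted tuple $(m_{11},\dots,m_{1b})$ is comparable to every element of $\mathcal S_{Z'}$, which I would deduce from exactness of the sequence together with the known Hilbert function of $\sum_j m_{1j}Q_j$ on $\pr^1$ — an incomparability there would force the iterated first difference of $H_{R/\Iz}$ with respect to the two gradings to be negative in some bidegree, contradicting $\depth R/\Iz=2$. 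For the converse: if $\Ssz$ is totally ordered then so is $\mathcal S_{Z'}$, hence $Z'$ is ACM by induction, and the total ordering of $\Ssz$ is exactly what makes the two outer Hilbert functions in the sequence fit together with no overlap, i.e.\ $H_{R/\Iz}(i,j)=H_{R/I_{Z'}}(i-1,j)+H_{R/(\Iz+(L_{P_1}))}(i,j)$, whence $\depth R/\Iz=2$.

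The main obstacle is exactly this last matching. One must determine precisely which ``step'' of the staircase-shaped Hilbert function of an ACM $Z$ each element of the totally ordered set $\Ssz$ is responsible for, and then show that the total-ordering hypothesis is \emph{equivalent} to — not merely sufficient or necessary for — the nonnegativity of this iterated difference in every bidegree. In practice this means writing down a closed formula for the Hilbert function of an ACM scheme $Z$ directly in terms of the ordered tuples of $\Ssz$ and verifying that any failure of comparability makes the true Hilbert function strictly smaller in some bidegree; keeping the reordering of the rows (needed in order to display the chain) compatible with the filtration by $h$ is the delicate point.
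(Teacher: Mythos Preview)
The paper does not prove this statement: it is quoted verbatim as \cite[Theorem 4.8]{Gu2} and invoked as a black box, so there is no in-paper proof to compare your proposal against. Any assessment of ``same approach vs.\ different approach'' would have to be made against Guardo's original paper, not this one.

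On the substance of your sketch: the residuation step $\Iz:L_{P_1}=I_{Z'}$ and the identification of $\mathcal S_{Z'}$ as $\Ssz$ with the tuple $(m_{11},\dots,m_{1b})$ removed are both correct, and the depth inequality $\depth A\ge\min(\depth B,\depth C+1)$ applied to your short exact sequence does show that $Z$ ACM forces $Z'$ ACM, since the quotient $R/(\Iz+(L_{P_1}))$ is a one-dimensional Cohen--Macaulay ring. The genuine soft spot is the one you flag yourself: in the converse direction, knowing $\depth R/I_{Z'}=2$ and $\depth R/(\Iz+(L_{P_1}))=1$ gives only $\depth R/\Iz\ge 1$ from the long exact sequence, so you really do need the combinatorial input from the total ordering to rule out depth $1$. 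Your proposed route---showing that total ordering of $\Ssz$ forces the iterated first difference of $H_{R/\Iz}$ to be nonnegative everywhere, and that this in turn characterises depth $2$---is the right shape, but as written it is a plan rather than a proof; the equivalence between ``first difference nonnegative'' and ``depth $\ge 2$'' in the bigraded setting needs to be stated and proved (or cited) precisely, and the bookkeeping translating the chain condition on $\Ssz$ into that nonnegativity has to be carried out. If you intend this as a self-contained replacement for the citation, that is where the real work lies.
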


\begin{remark}Recall that the bigraded {\bf Hilbert function}
of $Z$ is defined by
$H_Z(i,j) = \dim_k R_{i,j}-\dim_k (I_Z)_{i,j}$.
When $Z \subseteq \popo$ is an ACM fat point scheme,
then $H_Z(i,j)$ can be computed for all $(i,j)$ directly
from the set $\Ssz$ (see \cite{Gu2} for details). 
\end{remark}
\begin{example} \label{mainexample}
We illustrate these ideas with
the following set of fat points in $\popo$:
\begin{center}
\begin{picture}(150,110)(25,-10)
\put(0,40){$Z = $}\put(60,-10){\line(0,1){90}}
\put(80,-10){\line(0,1){90}} \put(100,-10){\line(0,1){90}}
\put(120,-10){\line(0,1){90}} \put(58,95){$Q_1$}
\put(78,95){$Q_2$} \put(98,95){$Q_3$} \put(118,95){$Q_4$}

\put(64,77){$5$}
\put(84,77){$4$}
\put(104,77){$2$}
\put(124,77){$2$}

\put(64,57){$5$}
\put(84,57){$3$}
\put(104,57){$2$}
\put(124,57){$1$}

\put(64,37){$4$}
\put(84,37){$3$}
\put(104,37){$1$}

\put(64,17){$2$}
\put(84,17){$1$}

\put(64,-2){$1$}

\put(55,-5){\line(1,0){70}}
\put(55,15){\line(1,0){70}}
\put(55,35){\line(1,0){70}}
\put(55,55){\line(1,0){70}}
\put(55,75){\line(1,0){70}}
\put(37,73){$P_1$}
\put(37,53){$P_2$}
\put(37,33){$P_3$}
\put(37,13){$P_4$}
\put(37,-7){$P_5$}

\put(60,75){\circle*{5}}
\put(80,75){\circle*{5}}
\put(100,75){\circle*{5}}
\put(120,75){\circle*{5}}

\put(60,55){\circle*{5}}
\put(80,55){\circle*{5}}
\put(100,55){\circle*{5}}
\put(120,55){\circle*{5}}

\put(60,35){\circle*{5}}
\put(80,35){\circle*{5}}
\put(100,35){\circle*{5}}

\put(60,15){\circle*{5}}
\put(80,15){\circle*{5}}
\put(60,-5){\circle*{5}}
\end{picture}
\end{center}
Consider the multiplicities $5,4,2,$ and $2$ on the first ruling,
i.e., the points whose first coordinate is $P_1$. From the construction of $\Ssz$, the tuples
$(5,4,2,2)$,$(4,3,1,1)$,$(3,2,0,0)$, $(2,1,0,0)$, $(1,0,0,0)$ all belong
to $\Ssz$.  Notice we successively subtract one from each entry,
until we reach a zero.  We carry out this procedure for each
ruling to find:
\begin{eqnarray*}
\Ssz &=& \{(5,4,2,2),(4,3,1,1),(3,2,0,0),(2,1,0,0),(1,0,0,0),(5,3,2,1), \\
&&        (4,2,1,0),(3,1,0,0), (2,0,0,0),(1,0,0,0),(4,3,1,0),(3,2,0,0),\\
&&(2,1,0,0),(1,0,0,0),(2,1,0,0),(1,0,0,0),(1,0,0,0)\}.
\end{eqnarray*}
The partial order $\succeq$, when restricted to the set $\Ssz$,
is a total ordering, i.e., there are no incomparable elements.
Thus, the set $Z$ is an ACM set of fat points.
\end{example}

\begin{remark}\label{order} We note that that the following two schemes
of fat points in $\popo$:
\begin{center}
\begin{picture}(150,120)(75,-10)
\put(0,40){$Z = $} \put(135,40){$Z' =$}
\put(60,-10){\line(0,1){90}}
\put(190,-10){\line(0,1){90}}\put(80,-10){\line(0,1){90}}\put(210,-10){\line(0,1){90}}
\put(100,-10){\line(0,1){90}}\put(230,-10){\line(0,1){90}}
\put(120,-10){\line(0,1){90}} \put(250,-10){\line(0,1){90}}
\put(58,95){$Q_1$} \put(78,95){$Q_2$} \put(98,95){$Q_3$}
\put(118,95){$Q_4$}

\put(188,95){$Q'_1$} \put(208,95){$Q'_2$} \put(228,95){$Q'_3$}
\put(248,95){$Q'_4$}

\put(64,77){$5$}
\put(194,77){$1$}\put(84,77){$4$}\put(214,77){$2$}
\put(104,77){$2$}\put(234,77){$0$}
\put(124,77){$2$}\put(254,77){$0$}

\put(64,57){$5$} \put(84,57){$3$} \put(104,57){$2$}
\put(124,57){$1$}

\put(194,57){$3$} \put(214,57){$4$} \put(234,57){$1$}
\put(254,57){$0$}

\put(64,37){$4$} \put(84,37){$3$} \put(104,37){$1$}

\put(194,37){$4$} \put(214,37){$5$} \put(234,37){$2$}
\put(254,37){$2$}

\put(64,17){$2$} \put(84,17){$1$} \put(194,17){$3$}
\put(214,17){$5$} \put(234,17){$2$} \put(254,17){$1$}

\put(64,-2){$1$} \put(194,-2){$0$} \put(214,-2){$1$}
\put(234,-2){$0$} \put(254,-2){$0$}

\put(55,-5){\line(1,0){70}} \put(55,15){\line(1,0){70}}
\put(55,35){\line(1,0){70}} \put(55,55){\line(1,0){70}}
\put(55,75){\line(1,0){70}} \put(37,73){$P_1$} \put(37,53){$P_2$}
\put(37,33){$P_3$} \put(37,13){$P_4$} \put(37,-7){$P_5$}

\put(167,73){$P'_1$} \put(167,53){$P'_2$}
\put(167,33){$P'_3$} \put(167,13){$P'_4$} \put(167,-7){$P'_5$}

\put(185,-5){\line(1,0){70}} \put(185,15){\line(1,0){70}}
\put(185,35){\line(1,0){70}} \put(185,55){\line(1,0){70}}
\put(185,75){\line(1,0){70}}

\put(60,75){\circle*{5}} \put(80,75){\circle*{5}}
\put(100,75){\circle*{5}} \put(120,75){\circle*{5}}

\put(190,75){\circle*{5}} \put(210,75){\circle*{5}}

\put(60,55){\circle*{5}} \put(80,55){\circle*{5}}
\put(100,55){\circle*{5}} \put(120,55){\circle*{5}}

\put(190,55){\circle*{5}} \put(210,55){\circle*{5}}
\put(230,55){\circle*{5}}

\put(60,35){\circle*{5}} \put(80,35){\circle*{5}}
\put(100,35){\circle*{5}}

\put(190,35){\circle*{5}} \put(210,35){\circle*{5}}
\put(230,35){\circle*{5}} \put(250,35){\circle*{5}}

\put(60,15){\circle*{5}} \put(80,15){\circle*{5}}
\put(190,15){\circle*{5}} \put(210,15){\circle*{5}}
\put(230,15){\circle*{5}} \put(250,15){\circle*{5}}

\put(60,-5){\circle*{5}} \put(210,-5){\circle*{5}}

\end{picture}
\end{center}
\noindent are both ACM, and reordering in a suitable way the lines
of type $(1,0)$ and $(0,1)$ they become the same. Thus, if $Z$ is
ACM, we can always suppose that $(m_{i1},\dots, m_{ib})\succeq
(m_{j1},\dots, m_{jb})$ for  $1 \leq i < j \leq a$.
\end{remark}

When $Z$ is ACM, there are some relative bounds on the
multiplicities:

\begin{lemma}\label{bounds}
Let $Z \subseteq \popo$ be an ACM set of fat points.
\begin{enumerate}
\item[$(i)$]
Suppose that there exists $i,k,j,l$ such that $P_i \times Q_j, P_i
\times Q_l, P_k \times Q_j$, and $P_k \times Q_l$ all belong to
$\supp(Z)$ and let $m_{ij},m_{il}$,$m_{kj}$, and $m_{kl}$ be
the corresponding nonzero multiplicities. Then
$m_{ij} \leq m_{il} + m_{kj} - m_{kl} + 1.$
\item[$(ii)$] Suppose that there
exists $i,k,j,l$ such that $P_i \times Q_j, P_i \times Q_l,$ and
$P_k \times Q_j$, all belong to $\supp(Z)$, but $P_k \times Q_l
\not\in \supp(Z)$ and let $m_{ij},m_{il}$,and $m_{kj}$ be the
corresponding nonzero multiplicities. Then
$m_{il} \leq m_{ij} - m_{kl} + 1.$
\end{enumerate}
\end{lemma}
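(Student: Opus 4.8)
The plan is to prove each inequality by contradiction, using Theorem \ref{equivalent}: assuming the asserted bound fails, I will exhibit two incomparable elements of $\Ssz$, which contradicts that $Z$ is ACM. In every case the two elements will be the full $b$-tuple from the $i$-th ruling, $\underline{\alpha}:=(t_{i1}(h_1),\dots,t_{ib}(h_1))\in\Ssz$, and the full $b$-tuple from the $k$-th ruling, $\underline{\beta}:=(t_{k1}(h_2),\dots,t_{kb}(h_2))\in\Ssz$, for shifts $h_1,h_2\in\N$ chosen below; here $i\ne k$ and $j\ne l$. It is enough to arrange that $\underline{\alpha}$ and $\underline{\beta}$ disagree in opposite directions in the two coordinates $j$ and $l$ — say $t_{ij}(h_1)>t_{kj}(h_2)$ while $t_{il}(h_1)<t_{kl}(h_2)$ — because this already forces incomparability no matter what happens in the remaining $b-2$ coordinates. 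Throughout I use that $m_{pq}\ge 1$ exactly when $P_p\times Q_q\in\supp(Z)$ (and note that the normalization of Remark \ref{order} plays no role).

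For part $(i)$, suppose $m_{ij}\ge m_{il}+m_{kj}-m_{kl}+2$ (so in particular all four multiplicities are positive), and split on the sign of $m_{kj}-m_{kl}+1$. If $m_{kl}\le m_{kj}+1$, take $h_1=m_{il}$ and $h_2=m_{kl}-1$; then $t_{il}(h_1)=0$, $t_{kl}(h_2)=1$, while $t_{ij}(h_1)=(m_{ij}-m_{il})_+\ge m_{ij}-m_{il}\ge m_{kj}-m_{kl}+2>m_{kj}-m_{kl}+1=t_{kj}(h_2)$. If instead $m_{kl}\ge m_{kj}+2$, take $h_1=m_{ij}-1$ and $h_2=m_{kj}$; then $t_{ij}(h_1)=1$, $t_{kj}(h_2)=0$, and I claim $t_{il}(h_1)=(m_{il}-m_{ij}+1)_+<m_{kl}-m_{kj}=t_{kl}(h_2)$: when $m_{il}-m_{ij}+1>0$ this is precisely the inequality $m_{il}-m_{ij}+1\le m_{kl}-m_{kj}-1$, which is the standing hypothesis rewritten, and when $m_{il}-m_{ij}+1\le 0$ it holds because $m_{kl}-m_{kj}\ge 2$. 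In either subcase $\underline{\alpha}$ and $\underline{\beta}$ are incomparable, contradicting Theorem \ref{equivalent}.

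For part $(ii)$ we have $m_{kl}=0$, so the assertion reads $m_{il}\le m_{ij}+1$. Suppose $m_{il}\ge m_{ij}+2$ and take $h_1=m_{ij}$, $h_2=m_{kj}-1$; then $t_{ij}(h_1)=0$, $t_{kj}(h_2)=1$, whereas $t_{il}(h_1)=(m_{il}-m_{ij})_+=m_{il}-m_{ij}\ge 2$ and $t_{kl}(h_2)=(1-m_{kj})_+=0$. So $\underline{\alpha}$ and $\underline{\beta}$ are incomparable, a contradiction. (An essentially identical argument with $h_1=m_{il}-1$, $h_2=0$ in fact yields the sharper bound $m_{il}\le m_{ij}-m_{kj}+1$.)

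The underlying computations are just bookkeeping with the truncation $x\mapsto x_+=\max\{0,x\}$; the only real choice is that of the shifts $h_1,h_2$, and the one genuine subtlety — which is exactly what forces the case split in $(i)$ — is that when $m_{kl}$ is small, in particular when $m_{kl}=1$, the coordinate-$l$ entry of $\underline{\beta}$ can never exceed a positive coordinate-$l$ entry of $\underline{\alpha}$ (at $h_2=0$ it equals $1$ and only decreases afterwards), so there one must instead push the coordinate-$l$ entry of $\underline{\alpha}$ down to $0$ and do the ``crossing'' in coordinate $j$. Thus the work is to choose $h_1,h_2$ so that every entry that enters an inequality lands where $(\cdot)_+$ acts trivially (or equals an explicit $0$ or $1$); once that is set up, each subcase is a single line.
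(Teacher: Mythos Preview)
Your argument is correct and takes essentially the same route as the paper's: derive a contradiction by exhibiting two incomparable tuples of $\Ssz$ coming from rows $i$ and $k$ at well-chosen shifts. The only difference is that the paper first normalizes via Remark~\ref{order} (arranging $m_{ij}\ge m_{il}$ and $m_{ij}\ge m_{kj}$) so that a single shift pair $(h_1,h_2)=(m_{il}-m_{kl}+1,\,0)$ suffices in~$(i)$ in place of your case split; your parenthetical sharper bound $m_{il}\le m_{ij}-m_{kj}+1$ in~$(ii)$ is in fact the form invoked later in the proof of Theorem~\ref{degP1xP1}.
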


\begin{proof}
$(i)$ Since $Z$ is ACM, using Remark \ref{order} we can always
reorder the lines of type $L_{P_i}$ and $L_{Q_j}$ so that
$m_{ij}\geq m_{il}$ and $m_{ij}\geq m_{kj}.$ Suppose that
$m_{ij} > m_{il} + m_{kj} - m_{kl} + 1.$ From the construction
of $\Ssz$, the tuples
\[(\star,m_{ij} - m_{il} +  m_{kl} -1, \star, m_{il} - m_{il} + m_{kl}-1,\star)
~~\text{and}~~ (\star,m_{kj},\star,m_{kl},\star)\] are elements
of $\Ssz$ where $\star$ denotes other elements of the tuple. But since
$m_{ij} - m_{il} +  m_{kl} -1 > m_{kj}$ but $0 \leq m_{il} -
m_{il} + m_{kl}-1 = m_{kl} - 1 < m_{kl}$, these tuples of $\Ssz$
will be incomparable, which contradicts the ACM property of $Z$.

$(ii)$  The proof is similar to $(i)$.  Suppose that $m_{il} >
m_{ij} - m_{kl} +1$.  In $\Ssz$, we will have tuples of the form
$(\star,m_{ij}-(m_{ij}-m_{kl}+1),\star,m_{il} - (m_{ij}-m_{kl}+1),\star)$
and $(\star,m_{kj},\star,0,\star)$. But then $\Ssz$
will have incomparable elements
since $m_{ij}-(m_{ij}-m_{kl}+1) <
m_{kj}$ but $m_{il} - (m_{ij}-m_{kl}+1) > 0$.
\end{proof}

In the sequel we will require a bigraded version of  Bezout's theorem:

\begin{theorem} \label{bezout}
Let $F,G \in k[x_0,x_1,y_1,y_0]$ be two bihomogeneous forms such
that $G$ is irreducible, $\deg G = (a,b)$, and $\deg F =
(c,d)$.  If the curves defined by $F$ and $G$ in $\popo$ meet at
more than $ad + bc$ points (counting multiplicities), then $F =
GF'$.
\end{theorem}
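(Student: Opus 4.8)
The plan is to deduce this from the classical Bezout theorem on the smooth projective surface $\popo$: the number $ad+bc$ is precisely the intersection number of a curve of bidegree $(a,b)$ with one of bidegree $(c,d)$. First I would dispose of trivialities: we may assume $\deg G=(a,b)\neq(0,0)$, since an irreducible form is in particular not a unit, and then $V(G)=\{G=0\}$ is an integral projective curve in $\popo$. Arguing by contradiction, suppose $F\neq GF'$ for every bihomogeneous $F'$. Since $R=k[x_0,x_1,y_0,y_1]$ is a UFD and $G$ is irreducible, the ideal $(G)$ is prime, so $F\notin(G)$; in particular $F\neq 0$. Then $V(G)$ is not contained in $V(F)=\{F=0\}$, for otherwise, being irreducible, $V(G)$ would be an irreducible component of $V(F)$, which forces $G\mid F$. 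Hence $V(F)$ and $V(G)$ have no common component, and $V(F)\cap V(G)$ is a finite set of points.

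Next I would invoke Bezout's theorem on $\popo$: for effective divisors $C=V(F)$ and $D=V(G)$ with no common component,
\[
\sum_{p\in C\cap D} i(p;C,D)=C\cdot D,
\]
where $i(p;C,D)=\dim_k\mathcal{O}_{\popo,p}/(F,G)$ is the local intersection multiplicity (this is the sense in which multiplicities are counted in the statement) and $C\cdot D$ is the intersection number on the surface. Writing $H_1$ and $H_2$ for the classes of a $(1,0)$- and a $(0,1)$-divisor, one has $H_1^2=H_2^2=0$ (two distinct members of either ruling are disjoint) and $H_1\cdot H_2=1$, while $[C]=cH_1+dH_2$ and $[D]=aH_1+bH_2$; therefore $C\cdot D=(cH_1+dH_2)\cdot(aH_1+bH_2)=ad+bc$. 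Consequently $F$ and $G$ meet in at most $ad+bc$ points counted with multiplicity, contradicting the hypothesis. Hence $F\in(G)$, that is, $F=GF'$ for a bihomogeneous $F'$ of degree $(c-a,d-b)$.

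I expect the only real care needed to be bookkeeping rather than geometry: one must confirm that the ``points counted with multiplicities'' in the statement are exactly those for which surface Bezout yields the clean total $C\cdot D$, and that the local multiplicities globalize — which is where smoothness of $\popo$ enters — but both are entirely standard. If one wishes to avoid surface intersection theory altogether, the same argument runs directly on the curve $V(G)$: the restriction of $F$ is then a nonzero global section of the line bundle $\mathcal{O}_{\popo}(c,d)|_{V(G)}$, whose degree on the integral projective curve $V(G)$ is again $ad+bc$ (computed by the same class multiplication), and a nonzero section of a degree-$e$ line bundle on an integral projective curve has at most $e$ zeros counted with multiplicity.
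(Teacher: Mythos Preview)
Your argument is correct and is the standard deduction of the bigraded Bezout inequality from intersection theory on the smooth surface $\popo$: the key computation $[V(F)]\cdot[V(G)]=(cH_1+dH_2)\cdot(aH_1+bH_2)=ad+bc$ together with the UFD observation that $G$ irreducible and $G\nmid F$ forces $V(F)$ and $V(G)$ to share no component is exactly what is needed. The alternative you sketch, restricting $F$ to the integral curve $V(G)$ and bounding zeros by the degree of the line bundle, is equally valid.

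As for comparison with the paper: there is nothing to compare. The paper does not supply a proof of this theorem; it is simply recorded as a tool (``In the sequel we will require a bigraded version of Bezout's theorem'') and then invoked in the proof of Theorem~\ref{degP1xP1}. So your write-up goes beyond what the paper does here. If anything, you could trim the discussion of what ``counting multiplicities'' means, since in the paper's applications the curve $V(G)$ is always a ruling $L_i$ or $R_j$ of bidegree $(1,0)$ or $(0,1)$, and the ``multiplicities'' being counted are just the fat-point multiplicities along that line, which manifestly agree with local intersection numbers.
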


\subsection{Separators of fat points} In \cite{GVT3},
the authors introduced the notion of
a separator for a set of fat points in $\pnpm$.  We recall
these results, but specialize to the case of $\popo$.  
Note, we will denote
a point of $\popo$ simply by
$P$ instead $P_i \times Q_j$.

\begin{definition}
Let $Z = m_1P_1 + \cdots + m_iP_i + \cdots
+ m_sP_s$ be a set of fat points in $\popo$.  We say
that $F$ is a {\bf separator of the point $P_i$ of multiplicity
$m_i$} if $F \in I_{P_i}^{m_i-1} \setminus I_{P_i}^{m_i}$
and $F \in I_{P_j}^{m_j}$ for all $j \neq i$.
\end{definition}

If we let $Z' = m_1P_1 + \cdots + (m_i-1)P_i + \cdots
+ m_sP_s$, then a separator of the point $P_i$ of multiplicity
$m_i$ is also an element of $F \in I_{Z'}\setminus I_Z$.  The
set of minimal separators are defined in terms of the ideals
$I_{Z'}$ and $I_Z$.

\begin{definition}
A set $\{F_1,\ldots,F_p\}$ is a set of {\bf minimal separators of $P_i$
of multiplicity $m_i$} if $I_{Z'}/I_Z = (\overline{F}_1,\ldots,\overline{F}_p)$,
and there does not exist a set $\{G_1,\ldots,G_q\}$ with
$q < p$ such that  $I_{Z'}/I_Z = (\overline{G}_1,\ldots,\overline{G}_q)$.
\end{definition}

Important for this paper is the following definition:

\begin{definition}
The {\bf degree of the minimal separators of $P_i$ of multiplicity
$m_i$}, denoted $\deg_Z(P_i)$, is the tuple
\[\deg_Z(P_i) = (\deg F_1,\ldots,\deg F_p) ~~ \text{where $\deg F_i \in \N^2$}\]
and $F_1,\ldots,F_p$ is any set of minimal separators of $P_i$ of multiplicity
$m_i$.
\end{definition}

For a general fat point scheme $Z \subseteq \popo$, there is no
known formula for $p = |\deg_Z(P)|$.  However, as a special case
of Theorem {4.3} and Theorem {5.1} of \cite{GVT3}, we can compute
the exact value for $p$ if we also assume that $Z$ is ACM (as we
shall assume throughout the next section):

\begin{theorem} \label{degpoint} Let $Z \subseteq \popo$ be an ACM set of fat points.
If $P$ is a fat point of multiplicity $m$ of $Z$,
then $|\deg_Z(P)| = m$.
\end{theorem}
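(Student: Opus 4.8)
\section*{Proof proposal}

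The plan is to realize $p = |\deg_Z(P)|$ as the minimal number of generators of the module $M := I_{Z'}/I_Z$ over $R/I_Z$, and then to identify $M$ as a \emph{free} module over a polynomial ring in two variables. Write $P = P_i$, $m = m_i$, and $J = \bigcap_{j \neq i} I_{P_j}^{m_j}$, so that $I_Z = J \cap I_P^{m}$ and $I_{Z'} = J \cap I_P^{m-1}$. First I would observe that $I_P$ annihilates $M$, since $I_P(J \cap I_P^{m-1}) \subseteq J \cap I_P^{m} = I_Z$. Hence $M$ is a finitely generated bigraded module over $B := R/I_P$. Because $I_P = (L,L')$ with $\deg L = (1,0)$ and $\deg L' = (0,1)$, the forms $L,L'$ form a regular sequence in $R$, so $B$ is (up to a linear change of coordinates) a bigraded polynomial ring in two variables, in particular a two-dimensional regular ring; and the minimal number of generators of $M$ is the same computed over $R/I_Z$ or over $B$, namely $p = \mu_B(M)$.

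Next I would compute $\operatorname{rank}_B M$. Localizing at the prime $\mathfrak p = I_P$ of $R$ and using that $J \not\subseteq \mathfrak p$ (the points of $\supp(Z)$ are distinct), one gets $(I_{Z'})_{\mathfrak p} = (I_P^{m-1})_{\mathfrak p}$ and $(I_Z)_{\mathfrak p} = (I_P^{m})_{\mathfrak p}$, hence $M_{\mathfrak p} \cong (I_P^{m-1}/I_P^{m})_{\mathfrak p}$. Since $I_P$ is a complete intersection, $I_P^{m-1}/I_P^{m} \cong \operatorname{Sym}^{m-1}(I_P/I_P^2)$ is free over $B$ of rank $m$, with basis the images of $L^{m-1}, L^{m-2}L', \dots, L'^{\,m-1}$. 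Therefore $\operatorname{rank}_B M = m$; in particular $M \neq 0$ and $\dim_R M = \dim B = 2$. (The ACM hypothesis has not been used so far.)

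Now I would bring in the ACM hypothesis. Since $Z$ is ACM, $R/I_Z$ is Cohen-Macaulay of Krull dimension $2$, so $H^0_{\mathfrak m}(R/I_Z) = H^1_{\mathfrak m}(R/I_Z) = 0$ for $\mathfrak m = (x_0,x_1,y_0,y_1)$; also $H^0_{\mathfrak m}(R/I_{Z'}) = 0$, since $R/I_{Z'}$ has positive depth for any fat point scheme $Z'$ (and if $Z' = \emptyset$ then $M = R/I_Z$ and the claim below is immediate). Feeding the short exact sequence $0 \to M \to R/I_Z \to R/I_{Z'} \to 0$ into the long exact sequence of local cohomology forces $H^0_{\mathfrak m}(M) = H^1_{\mathfrak m}(M) = 0$, i.e. $\depth_R M \geq 2$. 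Combined with $\dim_R M = 2$ from the previous step, $M$ is a maximal Cohen-Macaulay module over the regular ring $B$, hence free by the Auslander-Buchsbaum formula. Consequently $p = \mu_B(M) = \operatorname{rank}_B M = m$.

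The main obstacle is the passage from "$R/I_Z$ is Cohen-Macaulay" to "$M$ is maximal Cohen-Macaulay over $B$": once the identity $I_P M = 0$ is in hand this is a short local-cohomology computation, and the rank count is routine. An alternative route with the same outcome is to invoke Theorem~4.3 and Theorem~5.1 of \cite{GVT3}, which already compute $p$ for ACM fat point schemes in $\pnpm$; the argument above is essentially the specialization of that machinery to $\popo$, where the relevant quotient ring degenerates to a two-variable polynomial ring and "maximal Cohen-Macaulay" becomes "free of rank $m$".
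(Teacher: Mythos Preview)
Your argument is correct. The paper itself does not give a proof of this statement: it simply records the result as a special case of Theorem~4.3 and Theorem~5.1 of \cite{GVT3}, exactly the alternative you mention at the end of your proposal. What you have supplied is a self-contained proof specialized to $\popo$: the key point, that $I_P$ annihilates $M = I_{Z'}/I_Z$ so that $M$ lives over the two-variable polynomial ring $B = R/I_P$, reduces the problem to showing $M$ is maximal Cohen--Macaulay over a regular ring of dimension two, and your local-cohomology computation from the short exact sequence $0 \to M \to R/I_Z \to R/I_{Z'} \to 0$ handles this cleanly once one notes $I_{Z'}$ is saturated. The rank computation via localization at $I_P$ is routine and correct. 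Your approach has the advantage of being independent of the machinery in \cite{GVT3}, at the cost of being specific to the case where $R/I_P$ is regular; the cited results in \cite{GVT3} work in $\pnpm$ for all $n,m$, where $R/I_P$ has dimension $n+m$ and one cannot conclude freeness so directly.
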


\section{Main Results}

\subsection{Fat points on a ruling}

We begin by looking at a special case, namely, $\supp(Z) = \{P
\times Q_1,\ldots, P \times Q_b\}$, i.e., all the points have the
same first coordinate.   The fact that these schemes are ACM
follows directly from Theorem \ref{equivalent}.  We first require
a lemma which depends upon the Hilbert functions of these 
schemes (see \cite[Theorem 2.2]{GV1}).

\begin{lemma}\label{hilbertptsonaline}
Let $Z$ be a set of fat points in $\popo$ of the form
\[Z = m_1(P \times Q_1) + m_2(P \times Q_2) + \cdots + m_b(P \times Q_b).\] 
Let $m = \max\{m_j\}_{j=1}^b$.  For $\ell = 0,\ldots,m-1$,
set $c_{\ell} = \sum_{p=1}^b (m_p - \ell)_+$.  If $(i,j) \not\succeq (\ell,c_{\ell})$
for all $\ell \in \{0,\ldots,m-1\}$, then $\dim_k (I_Z)_{i,j} = 0$.
\end{lemma}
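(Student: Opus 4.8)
The plan is to reduce the vanishing statement to a direct consequence of the known Hilbert function computation for fat points supported on a single ruling of $\popo$. Since all of the $b$ points $P \times Q_1, \dots, P \times Q_b$ share the same first coordinate, the scheme $Z$ lies on the single line $L_P$ of degree $(1,0)$, and by Theorem \ref{equivalent} it is automatically ACM; thus its Hilbert function is governed by the set $\Ssz$, which here consists of the single tower of $b$-tuples obtained by repeatedly subtracting $1$ from $(m_1,\dots,m_b)$, namely the tuples $(m_1-\ell)_+,\dots,(m_b-\ell)_+$ for $\ell = 0,\dots,m-1$. The number $c_\ell = \sum_{p=1}^b (m_p-\ell)_+$ is exactly the "size" of the $\ell$-th such tuple, so the pairs $(\ell, c_\ell)$ for $\ell = 0,\dots,m-1$ encode, in the sense of \cite{GV1}, the bigraded Hilbert function of $Z$.

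First I would invoke \cite[Theorem 2.2]{GV1} (or the explicit description of $H_Z$ for points on a ruling in \cite{Gu2}) to write down $H_Z(i,j)$ in closed form in terms of the $c_\ell$. The essential structural fact is that $I_Z$ is generated, up to the ideal $(L_P)$-power filtration, by forms whose bidegrees are controlled by the "staircase" $\{(0,c_0),(1,c_1),\dots,(m-1,c_{m-1}),(m,0)\}$: concretely, $(I_Z)_{i,j} \neq 0$ precisely when $(i,j)$ dominates one of the corners $(\ell,c_\ell)$, because a nonzero form of bidegree $(i,j)$ in $I_Z$ must, after factoring out the maximal power $L_P^\ell$ that it allows, have its remaining $(0,\,\cdot\,)$-part vanish to the prescribed orders $(m_p-\ell)_+$ at the $b$ points $Q_1,\dots,Q_b$ on $\pr^1$, which forces $j \geq \sum_p (m_p-\ell)_+ = c_\ell$. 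Then I would argue the contrapositive: if $(I_Z)_{i,j} \neq 0$, pick $0 \neq F \in (I_Z)_{i,j}$ and let $\ell$ be the largest integer with $L_P^\ell \mid F$; writing $F = L_P^\ell G$ with $G \notin (L_P)$, one shows $\ell \leq m$ (else $F \in I_Z$ forces nothing new but $i \geq \ell > m-1$ already gives $(i,j)\succeq (m-1,c_{m-1})$ since $c_{m-1}\le j$ trivially when... — handled by the $\ell = m$ corner $(m,0)$), and for $\ell \le m-1$ the form $G$ restricted to $L_P \cong \pr^1$ vanishes at $Q_p$ to order $(m_p-\ell)_+$, hence $\deg_y G = j \geq c_\ell$ and $i \geq \ell$, so $(i,j) \succeq (\ell, c_\ell)$. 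This contradicts the hypothesis that $(i,j) \not\succeq (\ell,c_\ell)$ for all $\ell \in \{0,\dots,m-1\}$.

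The one point needing care — and the main obstacle — is handling the maximal power $\ell = m$ (and $\ell > m-1$ in general) cleanly: when $F$ is divisible by $L_P^m$ but by no higher power, $G$ need not vanish at any $Q_p$ at all, so the bound on $j$ degenerates; here one instead uses $i \geq m \geq m-1$ together with $j \geq 0 = c_m \le c_{m-1}$... which does not immediately give $(i,j)\succeq(m-1,c_{m-1})$ unless $j \ge c_{m-1}$. The correct resolution is that an $F$ divisible by $L_P^m$ with no further vanishing conditions is genuinely in $I_Z$ only when, after removing $L_P^m$, the residual vanishing orders $(m_p-m)_+$ are all zero, which holds exactly because $m = \max m_p$; so such $F$ exists for all $j \ge 0$ once $i \ge m$, and then $(i,j) \succeq (m-1, j) \succeq (m-1, c_{m-1})$ is false in general — meaning the lemma's hypothesis already excludes $i \ge m-1$ paired with small $j$ only via some $(\ell,c_\ell)$ with $\ell < m$. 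I would therefore state the argument so that it is the full list of corners $(\ell, c_\ell)$, $\ell = 0,\dots,m-1$, together with the trivial observation $c_{m-1}\ge 1$, that pins down the support; the cleanest write-up simply quotes the Hilbert function formula from \cite{GV1} and reads off that $\dim_k(I_Z)_{i,j} = \dim_k R_{i,j} - H_Z(i,j)$ equals $0$ exactly on the complement of $\bigcup_{\ell=0}^{m-1}\{(i,j): (i,j)\succeq(\ell,c_\ell)\}$, after checking $\dim_k R_{i,j} = (i+1)(j+1)$ matches $H_Z(i,j)$ there. This turns the whole lemma into a bookkeeping verification against the cited Hilbert function, which is the route I would take to keep the proof short.
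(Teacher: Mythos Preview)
Your overall plan---quote the Hilbert function from \cite[Theorem~2.2]{GV1} and read off the vanishing---is precisely what the paper does: the lemma is stated there without proof, with that citation as its justification. So at the level of strategy you and the paper agree.

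The difficulty you flag in the direct argument, however, is not a wrinkle to be smoothed over; it is a counterexample to the lemma \emph{as stated}. Take $b=1$, $m_1=2$, so $Z=2(P\times Q_1)$. Then $m=2$, $c_0=2$, $c_1=1$, and $L_P^{2}\in(I_Z)_{2,0}$ is nonzero, yet $(2,0)\not\succeq(0,2)$ and $(2,0)\not\succeq(1,1)$. More generally, once $i\ge m$ the form $L_P^{m}$ times anything of degree $(i-m,j)$ lies in $I_Z$, so $(I_Z)_{i,j}\ne 0$ for every $j\ge 0$; but since $c_{m-1}\ge 1$, the pair $(i,0)$ dominates none of the corners $(\ell,c_\ell)$ with $0\le\ell\le m-1$. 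Your own sentence ``such $F$ exists for all $j\ge 0$ once $i\ge m$'' is exactly right; the error is the next step, where you assert that the bookkeeping against \cite{GV1} will nonetheless deliver the stated conclusion. It cannot, because the stated conclusion is false in that range, and no correct Hilbert-function computation will say otherwise.

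The repair is to add the missing corner $(m,0)$: let $\ell$ run over $\{0,\dots,m\}$ with $c_m=0$, or equivalently add the hypothesis $i<m$. With that amendment your factor-out-$L_P^{\ell}$ argument is clean and complete: if $0\ne F\in(I_Z)_{i,j}$, write $F=L_P^{\ell}G$ with $\ell\le i$ maximal; then $G\notin(L_P)$, and restricting to the ruling shows $G$ vanishes to order at least $(m_p-\ell)_+$ at each $Q_p$, whence $j\ge c_\ell$ and $(i,j)\succeq(\ell,c_\ell)$, now with $\ell$ allowed up to $m$. You should trust the gap you found rather than paper over it.
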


\begin{theorem}\label{ptsonaline1}
Let $Z$ be a set of fat points in $\popo$ of the form
\[Z = m_1(P \times Q_1) + m_2(P \times Q_2) + \cdots + m_i(P \times Q_i)
+ \cdots + m_b(P \times Q_b),\] i.e., each point of $\supp(Z)$ has
the same first coordinate.  Fix an $i \in \{1,\ldots,b\}$, and set
\[b_{\ell} = \sum_{p=1}^b (m_{p} - \ell)_+
~~\mbox{for $\ell = 0,\ldots,m_{i} -1$.}\] Then
$\deg_Z(P \times Q_i) =   \{(\ell,b_{\ell}-1) ~|~ \ell=0,\ldots,m_{{i}-1}\}.$
\end{theorem}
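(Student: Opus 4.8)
The plan is to identify the degrees of the minimal separators by constructing, for each $\ell \in \{0,\ldots,m_i-1\}$, an explicit separator of $P\times Q_i$ of multiplicity $m_i$ whose degree is $(\ell, b_\ell-1)$, and then to show via Hilbert function comparisons that these $m_i$ forms constitute a \emph{minimal} generating set of $I_{Z'}/I_Z$. Here $Z' = m_1(P\times Q_1)+\cdots+(m_i-1)(P\times Q_i)+\cdots+m_b(P\times Q_b)$. Since all points share the first coordinate $P$, write $L = L_P$ for the unique $(1,0)$-form vanishing at $P$, and $L_j = L_{Q_j}$ for the $(0,1)$-form through $Q_j$. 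A natural candidate for a separator of degree $(\ell, b_\ell-1)$ is a form of the shape $F_\ell = L^\ell \cdot G_\ell$, where $G_\ell$ is a $(0,b_\ell-1)$-form in the $y$-variables built from appropriate powers of the $L_j$'s: one wants $G_\ell$ to vanish to order $(m_p-\ell)_+$ along $L_p$ for $p\neq i$ and to order exactly $(m_i-\ell)_+ - 1 = (m_i-1-\ell)_+$ along $L_i$, with the $L^\ell$ factor supplying the remaining vanishing in the $x$-direction so that $F_\ell \in I_{P\times Q_j}^{m_j}$ for $j\neq i$ and $F_\ell \in I_{P\times Q_i}^{m_i-1}\setminus I_{P\times Q_i}^{m_i}$. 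A dimension count shows $\sum_{p\neq i}(m_p-\ell)_+ + (m_i-1-\ell)_+ = b_\ell - 1$ exactly (for $\ell \le m_i-1$, since then $m_i-\ell \ge 1$), so such a $G_\ell$ exists as a product of linear forms and $\deg F_\ell = (\ell, b_\ell-1)$ as claimed.

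The first main step is therefore to verify that each $F_\ell$ is genuinely a separator: this is a local check at each point, using that $I_{P\times Q_j} = (L, L_j)$ and that along the ruling $L=0$ the fat point structure is controlled by the exponents of the $L_j$. One must check $F_\ell \notin I_{P\times Q_i}^{m_i}$, which follows because the total order of vanishing of $L^\ell G_\ell$ at $P\times Q_i$ in the relevant filtration is $\ell + (m_i-1-\ell) = m_i-1$ when $\ell \le m_i-1$. The second, and I expect principal, step is minimality: one must show the images $\overline{F}_0,\ldots,\overline{F}_{m_i-1}$ minimally generate $I_{Z'}/I_Z$. By Theorem \ref{degpoint}, $|\deg_Z(P\times Q_i)| = m_i$, so we already know a minimal set has exactly $m_i$ elements — it remains to see that our $m_i$ forms \emph{are} a generating set and that none is redundant. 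For this I would compare Hilbert functions: $\dim_k(I_{Z'}/I_Z)_{(i,j)} = H_Z(i,j) - H_{Z'}(i,j)$, and both $H_Z$ and $H_{Z'}$ are computable from the sets $\Ssz$ and $\mathcal S_{Z'}$ via \cite{Gu2} (as noted in the Remark after Theorem \ref{equivalent}), using Lemma \ref{hilbertptsonaline} to pin down exactly where $(I_Z)_{(i,j)}$ and $(I_{Z'})_{(i,j)}$ are nonzero. The forms $F_\ell$ and their multiples by monomials in $R$ span a subspace of $I_{Z'}/I_Z$; one shows this subspace has the full Hilbert function in every bidegree, which forces generation, and the distinctness of the bidegrees $(\ell, b_\ell-1)$ — note $b_0 > b_1 > \cdots > b_{m_i-1}$ strictly while the first coordinates $0,1,\ldots,m_i-1$ strictly increase — forces that no $\overline{F}_\ell$ lies in the submodule generated by the others, giving minimality.

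The subtlety to watch is the interaction between the two gradings: an element of $I_{Z'}/I_Z$ in bidegree $(\ell, b_\ell-1)$ could in principle be generated by a lower-degree separator times an $x$- or $y$-monomial, and one must rule this out by checking that $(I_{Z'}/I_Z)_{(\ell', j')} = 0$ whenever $(\ell',j') \preceq (\ell, b_\ell-1)$ with $(\ell',j')\neq(\ell,b_\ell-1)$ and $\ell' < \ell$ — i.e. that the ``staircase'' of bidegrees where the quotient module is supported has its generators precisely at the corners $(\ell, b_\ell-1)$. This is exactly where Lemma \ref{hilbertptsonaline} applied to $Z$ and to $Z'$ does the work: the condition $(i,j)\not\succeq(\ell, c_\ell)$ for all $\ell$ detecting vanishing of $(I_Z)_{(i,j)}$ translates, after subtracting the analogous statement for $Z'$, into a clean description of the support of $I_{Z'}/I_Z$, and the corners of that support region are forced to be the $(\ell, b_\ell - 1)$. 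Once the support and its corners are identified, minimality and the degree formula follow, and the count $m_i$ matches Theorem \ref{degpoint} as a consistency check.
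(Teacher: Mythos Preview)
Your construction of the separators $F_\ell = L^\ell G_\ell$ and the verification that each lies in $I_{Z'}\setminus I_Z$ matches the paper's proof exactly. The difference is in the minimality step. You propose to compute the full Hilbert function of $I_{Z'}/I_Z$ as $H_Z - H_{Z'}$, identify its support region, and read off the corners. This works, but the paper takes a shorter route: it simply shows that any separator $G$ of degree $(c,d)$ must satisfy $(c,d)\succeq(\ell,b_\ell-1)$ for some $\ell$, by observing that the numbers $c_\ell$ of Lemma~\ref{hilbertptsonaline} for the scheme $Z'$ are precisely the $b_\ell-1$ (since reducing $m_i$ by one reduces each $b_\ell$ by one for $\ell\le m_i-1$), so if $(c,d)\not\succeq(\ell,b_\ell-1)$ for every $\ell$ then $(I_{Z'})_{c,d}=0$ outright and no such $G$ can exist. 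This avoids computing $H_Z$ entirely: Lemma~\ref{hilbertptsonaline} applied to $Z'$ alone suffices. Once every separator dominates one of the $m_i$ pairwise incomparable degrees $(\ell,b_\ell-1)$, each $F_\ell$ is forced to be minimal (nothing of strictly smaller degree can generate it), and the count $m_i$ from Theorem~\ref{degpoint} closes the argument. Your approach is not wrong, just heavier; the paper's observation that you only need the vanishing of $I_{Z'}$, not the difference $H_Z-H_{Z'}$, is the economy worth noting.
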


\begin{proof}
By Theorem
\ref{degpoint}, we have $|\deg_{Z}(P \times Q_i)| = m_i.$
We first
construct $m_{i}$ separators $F_0,\ldots,F_{m_{i}-1}$ of $P \times
Q_i$ of multiplicity $m_{i}$ where $\deg(F_\ell) =
(\ell,b_\ell-1)$ for $\ell = 0,\ldots,m_{i}-1$. Our second step is
to prove that these separators form a set of minimal separators.

To simplify our notation,  let $R  = L_P$ and $L_i = L_{Q_i}$ for
$i=1,\ldots,b$.  Fix an $\ell \in
\{0,\ldots,m_{i}-1\}$, and let \[A_{\ell}=R^{\ell} \text{ and }
B_{\ell} = L_1^{(m_{1}-\ell)_+} L_2^{(m_{2}-\ell)_+} \cdots
 L_i^{(m_{i}-\ell)_+-1} \cdots
 L_b^{(m_{b}-\ell)_+}\]
We then set $F_{\ell} = A_{\ell}B_{\ell}$.  By construction,
$\deg(F_{\ell}) = (\ell,b_{\ell}-1)$.

We now show that $F_{\ell} \in I_{Z'} \setminus I_Z$ where
$Z'$ denotes the set of fat points with the multiplicity of $P
\times Q_i$ reduced by one. Note that $F_{\ell} = R^{\ell}
L_i^{(m_{i}-\ell)_+-1} F'_{\ell}$, and since $\ell + (m_{i}-\ell)_+-1 = m_{i} -1,$
we have $F_{\ell} \not\in I_{Z}$ since $F _{\ell}\in I_{P\times
Q_i}^{m_{i}-1} \setminus I_{P \times Q_i}^{m_{i}}$.  This follows from 
the fact that $F'_{\ell}$ does not pass through
$P \times Q_i$.  Now let $P \times Q_f$ be any other point
in $\supp(Z)$  distinct from $P \times Q_i$.  Since $I_{P \times Q_f} =
(R,L_f)^{m_{f}}$, and since the exponents of $R$ and $L_f$ in
$F_{\ell}$ sum up to at least $m_{f}$, we have that $F _{\ell}\in
I_{P\times Q_f}^{m_{f}}$.  So $F_{\ell} \in I_{Z'} \setminus I_Z$.

Let us now show that the $F_\ell$ are minimal separators.  Let
$G$ be any separator of $P \times Q_i$ of multiplicity
$m_{i}$ with $\deg(G) = (c,d)$.   We want to show
that $\deg(G) \succeq (\ell,b_{\ell}-1)$ for some $\ell \in
\{0,\ldots,m_i-1\}$. If we can verify this fact, then the explicit
separators described above would form a minimal set of minimal
separators.

Suppose that for every $\ell$, $(c,d) \not\succeq
({\ell},b_{\ell}-1)$.  Since $Z'$ is also a fat point
scheme of points on a line, by Lemma \ref{hilbertptsonaline}
$\dim_k(I_{Z'})_{c,d} = 0$  ($b_{\ell}-1$ appears as some $c_{\ell}$
because we have reduced the multiplicty of $m_i$ by $1$).
So, $0 \neq G \in (I_{Z'})_{c,d} = (0)$, a contradiction.
\end{proof}

\begin{remark}\label{ptsonaline2}  By swapping the roles of the grading,
we can prove a similar result for sets of points whose second coordinate
are the same.  We leave it to the reader to write out the corresponding
statement of Theorem \ref{ptsonaline1}.
\end{remark}

\subsection{Separators of ACM fat points}
The main result of this paper is a formula to compute the
degree of a minimal separator for each fat point in
an ACM fat point scheme in $\popo$.

\begin{theorem}\label{degP1xP1}
Let $Z \subseteq \popo$ be an ACM set of fat points.  For any $P_i
\times Q_j \in \supp(Z)$, let
\[Y =m_{1j}(P_1 \times Q_j) +
\cdots + m_{ij}(P_i \times Q_j) + \cdots + m_{aj}(P_a \times
Q_j)\} \] be all the fat points of $Z$ whose support has  $P_i$ as
its first coordinate, and let
\[W = m_{i1}(P_i \times Q_1) + \cdots + m_{ij}(P_i \times Q_j)
+ \cdots + m_{ib}(P_i \times Q_b)\}\] be all the fat points of $Z$
whose support has  $Q_j$ as its second coordinate.  Set
\[a_{\ell} = \sum_{s=1}^a (m_{sj} - \ell)_+ ~~\text{and}
~~b_{\ell} = \sum_{p=1}^b (m_{ip} - \ell)_+ ~\hspace{.25cm}
\mbox{for $\ell = 0,\ldots,m_{ij}-1$.}\] Then
\[
\deg_Z(P_i \times Q_j) =  \{(a_{m_{ij}-1-\ell}-1,b_{\ell}-1) ~|~ \ell =
0,\ldots,m_{ij}-1\}.\]
\end{theorem}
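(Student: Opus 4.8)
The plan is to mimic the strategy of Theorem~\ref{ptsonaline1}: first exhibit $m_{ij}$ explicit separators of $P_i \times Q_j$ of multiplicity $m_{ij}$ with the prescribed degrees $(a_{m_{ij}-1-\ell}-1, b_\ell - 1)$ for $\ell = 0,\ldots,m_{ij}-1$, and then argue that no separator can have strictly smaller degree, so that these form a minimal set. Since $|\deg_Z(P_i\times Q_j)| = m_{ij}$ by Theorem~\ref{degpoint}, producing $m_{ij}$ separators with these degrees and checking minimality will suffice. By Remark~\ref{order} I may assume the rows and columns of the multiplicity array are sorted so that $(m_{i1},\ldots,m_{ib})$ and $(m_{1j},\ldots,m_{aj})$ are decreasing in the appropriate order; this is what makes the partial sums $a_\ell$ and $b_\ell$ well-behaved.

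For the construction, fix $\ell \in \{0,\ldots,m_{ij}-1\}$ and build a form that is a product of a power of $L_{P_i}$, a power of $L_{Q_j}$, and products of the linear forms $L_{P_s}$ (for $s\neq i$) and $L_{Q_p}$ (for $p\neq j$), with exponents chosen so that: (a) the form lies in $I_{P_s\times Q_p}^{m_{sp}}$ for every $(s,p)\neq(i,j)$ in the support, using that $I_{P_s\times Q_p}^{m_{sp}} = (L_{P_s},L_{Q_p})^{m_{sp}}$ so it is enough to have the $L_{P_s}$- and $L_{Q_p}$-exponents sum to at least $m_{sp}$; and (b) the $L_{P_i}$- and $L_{Q_j}$-exponents sum to exactly $m_{ij}-1$, so the form lies in $I_{P_i\times Q_j}^{m_{ij}-1}\setminus I_{P_i\times Q_j}^{m_{ij}}$. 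The natural choice is to put $L_{Q_p}$ to the power $(m_{ip}-\ell)_+$ (minus one when $p=j$) and $L_{P_s}$ to the power $(m_{sj}-(m_{ij}-1-\ell))_+$ (minus one when $s=i$), together with suitable extra factors of $L_{P_i}$ and $L_{Q_j}$ to bring the bidegree up to $(a_{m_{ij}-1-\ell}-1,\,b_\ell-1)$; the bound in Lemma~\ref{bounds} is exactly what guarantees these two "partial" separators (one coming from the row $Y$, one from the column $W$) are compatible and that the remaining exponents on off-row, off-column points are large enough. Verifying membership in $I_{Z'}\setminus I_Z$ is then a direct exponent count as in Theorem~\ref{ptsonaline1}.

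For minimality, let $G$ be any separator of $P_i\times Q_j$ of multiplicity $m_{ij}$, with $\deg G = (c,d)$, and suppose $(c,d)\not\succeq (a_{m_{ij}-1-\ell}-1, b_\ell-1)$ for every $\ell$. The idea is to restrict $G$ to the row $W$ and to the column $Y$: since $G$ is a separator for the big scheme $Z$, it restricts to a separator (or to a form in the relevant ideal) for the smaller one-dimensional schemes $Y'$ and $W'$ obtained by lowering the multiplicity of $P_i\times Q_j$ by one. Applying Lemma~\ref{hilbertptsonaline} to each one-dimensional scheme (as in the proof of Theorem~\ref{ptsonaline1}), the failure of the two inequalities $c \not\geq a_{m_{ij}-1-\ell}-1$ and $d\not\geq b_\ell-1$ across the appropriate ranges of $\ell$ forces the restricted forms to vanish, and a Bezout-type argument (Theorem~\ref{bezout}) lets me strip off the linear factors $L_{P_i}$ and $L_{Q_j}$ and conclude $G\in I_Z$, a contradiction. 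I expect the main obstacle to be precisely this last step: tracking which inequality fails for which $\ell$, matching it against the degree available in the restricted scheme, and managing the "diagonal" interaction between the row index $\ell$ and the column index $m_{ij}-1-\ell$ — the bookkeeping that makes the two partial orders on $Y$ and $W$ fit together into a single statement about $Z$. The ACM hypothesis, via Theorem~\ref{equivalent} and Lemma~\ref{bounds}, is what keeps this bookkeeping consistent; without it the two restrictions would not glue.
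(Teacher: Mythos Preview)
Your proposal is correct and follows essentially the same two-step strategy as the paper: construct the $m_{ij}$ explicit separators $F_\ell = A_\ell B_\ell$ as products of the linear forms $L_{P_s}$ and $L_{Q_p}$ with exactly the exponents you describe (no ``extra factors'' of $L_{P_i}$ or $L_{Q_j}$ are needed---the exponents already sum correctly), verify $F_\ell \in I_{Z'}\setminus I_Z$ by a case-by-case exponent count against Lemma~\ref{bounds}, and then for minimality use that any separator $G$ is simultaneously a separator for the one-line schemes $Y$ and $W$, invoke Theorem~\ref{ptsonaline1} (and its transposed version) to locate indices $t$ and $k$ with $t<k$, and finally apply Theorem~\ref{bezout} iteratively to peel off powers of $L_{P_i}$ and $L_{Q_j}$ until their exponents sum to $m_{ij}$, contradicting $G\notin I_{P_i\times Q_j}^{m_{ij}}$. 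One small correction to your sketch: in the minimality step the restricted forms do \emph{not} vanish---rather, Theorem~\ref{ptsonaline1} gives a lower bound on $(c,d)$ from each of $Y$ and $W$, and it is the interplay of these two bounds with the assumed failure of $(c,d)\succeq (a_{m-1-\ell}-1,b_\ell-1)$ that isolates a single index $p$ governing how many factors of each line Bezout strips off; this is precisely the bookkeeping you flagged as the main obstacle.
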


Before proving this result, let us illustrate how to use it.

\begin{example} We continue to use the example
of Example \ref{mainexample}.  For convenience, we recall that
\begin{center}
\begin{picture}(150,110)(25,-10)
\put(0,40){$Z = $} \put(60,-10){\line(0,1){90}}
\put(80,-10){\line(0,1){90}} \put(100,-10){\line(0,1){90}}
\put(120,-10){\line(0,1){90}} \put(58,95){$Q_1$}
\put(78,95){$Q_2$} \put(98,95){$Q_3$} \put(118,95){$Q_4$}

\put(64,77){$5$} \put(84,77){$4$} \put(104,77){$2$}
\put(124,77){$2$}

\put(64,57){$5$} \put(84,57){$3$} \put(104,57){$2$}
\put(124,57){$1$}

\put(64,37){$4$} \put(84,37){$3$} \put(104,37){$1$}

\put(64,17){$2$} \put(84,17){$1$}

\put(64,-2){$1$}

\put(55,-5){\line(1,0){70}} \put(55,15){\line(1,0){70}}
\put(55,35){\line(1,0){70}} \put(55,55){\line(1,0){70}}
\put(55,75){\line(1,0){70}} \put(37,73){$P_1$} \put(37,53){$P_2$}
\put(37,33){$P_3$} \put(37,13){$P_4$} \put(37,-7){$P_5$}

\put(60,75){\circle*{5}} \put(80,75){\circle*{5}}
\put(100,75){\circle*{5}} \put(120,75){\circle*{5}}

\put(60,55){\circle*{5}} \put(80,55){\circle*{5}}
\put(100,55){\circle*{5}} \put(120,55){\circle*{5}}

\put(60,35){\circle*{5}} \put(80,35){\circle*{5}}
\put(100,35){\circle*{5}}

\put(60,15){\circle*{5}} \put(80,15){\circle*{5}}
\put(60,-5){\circle*{5}}
\end{picture}
\end{center}
We will compute $\deg_Z(P_3 \times Q_2)$.  The multiplicity of
$P_3 \times Q_2$ is $m_{3,2} = 3$, so we will have $|\deg_Z(P_3
\times Q_2)| = 3$. In the notation of Theorem \ref{degP1xP1}, we
have
\[Y =4(P_1 \times Q_2) + 3(P_2 \times Q_2) + 3(P_3 \times Q_2) + 1(P_4 \times Q_2)\]
and
\[W = 4(P_3 \times Q_1) + 3(P_3 \times Q_2) + 1(P_3 \times Q_3).\]
We now calculate $a_0,a_1,a_2$, and $b_0,b_1,b_2$:
\[\begin{array}{lcrclclr}
a_0 & = & 4+3+3+1 &\hspace{.25cm} & b_0 & = & 4+3+1 \\
a_1 & = & 3+2+2+0 && b_1 & = & 3+2+0 \\
a_2 & = & 2+1+1+0 && b_2 & = & 2+1+0. \\
\end{array}\]
We thus get
\[
\deg_Z(P_3 \times Q_2) = \{(a_0-1,b_2-1),(a_1-1,b_1-1),(a_2-1,b_0-1)\}
=  \{(10,2),(6,4),(3,7)\}.\]
Furthermore, as we will describe in the proof of Theorem
\ref{degP1xP1}, we can explicitly determine these minimal
separators. If $L_{P_i}$ denotes the degree $(1,0)$ form that
passes through $P_i$ and $L_{Q_j}$ denotes the degree $(0,1)$ form
that passes through $Q_j$, then the forms
$F_1=L_{P_1}^4L_{P_2}^3L_{P_3}^2L_{P_4}L_{Q_1}^2,$
$F_2=L_{P_1}^3L_{P_2}^2L_{P_3}L_{Q_1}^3L_{Q_2}$ and
$F_3=L_{P_1}^2L_{P_2}L_{Q_1}^4L_{Q_2}^2L_{Q_3}$ are the minimal
separators of $P_3 \times Q_2$ of multiplicity $m_{3,2}=3$ with the
required degrees.
\end{example}

\begin{proof} (of Theorem \ref{degP1xP1})
Fix a point $P_i \times Q_j$ of multiplicity $m_{ij}$ in $Z$.
There are two main steps.  First, we
construct $m_{ij}$ separators $F_0,\ldots,F_{m_{ij}-1}$ of $P_i
\times Q_j$ of multiplicity $m_{ij}$ where $\deg(F_\ell) =
(a_{m_{ij}-\ell-1}-1,b_\ell-1)$ for $\ell = 0,\ldots,m_{ij}-1$.
Second, we prove that these separators form a set of
minimal separators of $P_i \times Q_j$ of multiplicity $m_{ij}$.

To simplify our notation slightly, let $L_i$ denote the degree
$(1,0)$ for that passes through $P_i$ for $i=1,\ldots,a$,
and let $R_j$ denote the
degree $(0,1)$ form that passes through $Q_j$ for $j=1,\ldots,b$.  Fix an $\ell \in
\{0,\ldots,m_{ij}-1\}$, and let
\[A_{\ell}= L_1^{(m_{1j}-(m_{ij}-\ell-1))_+}L_2^{(m_{2j}-(m_{ij}-\ell-1))_+}
\cdots L_i^{(m_{ij}-(m_{ij}-\ell-1))_+-1} \cdots
L_a^{(m_{aj}-(m_{ij}-\ell-1))_+}\] and
\[B_{\ell} = R_1^{(m_{i1}-\ell)_+} R_2^{(m_{i2}-\ell)_+} \cdots
 R_j^{(m_{ij}-\ell)_+-1} \cdots
 R_b^{(m_{ib}-\ell)_+}\]
We then set $F = F_{\ell} = A_{\ell}B_{\ell}$.  By construction,
$\deg(F) = (a_{m_{ij}-\ell-1}-1,b_{\ell}-1)$.

We now need to show that $F \in I_{Z'} \setminus I_Z$ where $Z'$
denotes the set of fat points with the multiplicity of $P_i \times
Q_j$ reduced by one. Note that $F =
L_i^{(m_{ij}-(m_{ij}-\ell-1))_+-1}  R_j^{(m_{ij}-\ell)_+-1} F'$,
and since \[(m_{ij}-(m_{ij}-\ell-1))_+-1 + (m_{ij}-\ell)_+-1 =
m_{ij} -1,\] we have $F \not\in I_{Z}$ since $F \in I_{P_i\times
Q_j}^{m_{ij}-1} \setminus I_{P_i \times Q_j}^{m_{ij}}$.
This is because $F'$ does not pass through $P_i
\times Q_j$.

Now take any other point $P_e \times Q_f$ in the support of $Z$
distinct from $P_i \times Q_j$.  We need to show that $F \in
I_{P_e \times Q_f}^{m_{ef}}$.  Since $I_{P_e \times Q_f} =
(L_e,R_f)^{m_{ef}} = (L_e^uR_f^v ~|~ u+v = m_{ef})$, it will suffice to show that the exponents of
$L_e$ and $R_f$ in $F$ sum up to at least $m_{ef}$. We break this
problem into a number of cases.  Recall that since we are assuming
that $Z$ is ACM, we can assume that $m_{ab} \geq m_{cb}$ if $a <
c$ and $m_{ab} \geq m_{ad}$ if $b < d$.

\noindent {\bf Case 1}:  $e <i$ and $f < j$.

We have $F =
L_e^{(m_{ej}-(m_{ij}-\ell-1))_+}R_f^{(m_{if}-\ell)_+}F'$. We have
\[(m_{ej}-(m_{ij}-\ell-1))_+ + (m_{if}-\ell)_+ = m_{ej}+m_{if}-m_{ij}+1\]
because $m_{ej} \geq m_{ij}$ and $m_{if} - \ell \geq m_{ij} - \ell
\geq 1$. But by Lemma \ref{bounds}, we have
$m_{ej}+m_{if}-m_{ij}+1 \geq m_{ef}$.

\noindent {\bf Case 2}: $e < i$ and $f =j$.

We observe that $F =
L_e^{(m_{ej}-(m_{ij}-\ell-1))_+}R_j^{(m_{ij}-\ell)_+-1}F'$. So, we
have
\[(m_{ej}-(m_{ij}-\ell-1))_+ + (m_{ij}-\ell)_+-1 =
m_{ej}-m_{ij}+\ell+1+m_{ij}-\ell-1 = m_{ej}\] since $m_{ej} \geq
m_{ij}$ and $m_{ij} - \ell \geq 1$.

\noindent {\bf Case 3}: $e < i$ and $j < f \leq b$.

In this case, $F =
L_e^{(m_{ej}-(m_{ij}-\ell-1))_+}R_f^{(m_{if}-\ell)_+}F'$. There
are two possibilities: (a) $m_{if} - \ell \geq 0$, and (b) $m_{if}
- \ell < 0$.  If $(a)$ holds, then since $m_{ej} \geq m_{ij}$
\[(m_{ej}-(m_{ij}-\ell-1))_+ + (m_{if}-\ell)_+ = m_{ej}+m_{if}-m_{ij} +1 \geq m_{ef}\]
where the last inequality holds by Lemma \ref{bounds}. If $(b)$
holds, then $R_f$ does not appear as a factor of $F$. But
$(m_{ej}-(m_{ij}-\ell-1))_+ = m_{ej}-m_{ij}+ \ell +1 >
m_{ej}-m_{ij}+m_{if}+1 \geq m_{ef}$, where the last inequality
again holds by Lemma \ref{bounds}. \noindent

\noindent {\bf Case 4}: $e < i$ and $f > b$.

In this case, $R_f$ is not a factor of $F$, and so $F =
L_e^{(m_{ej}-(m_{ij}-\ell-1))_+}F'$.  Note that since $f >b$,
the points $P_{ej},P_{ef}$, and $P_{ij}$ are in the
support of $Z$, but $P_{if}$ is not. By Lemma \ref{bounds},  we
have $m_{ej} - m_{ij}+1 \geq m_{ef}$. So
$(m_{ej}-(m_{ij}-\ell-1))_+ \geq m_{ef}$.

\noindent {\bf Case 5}: $e=i$.

If $e = i$, then $f \in \{1,\ldots,\hat{j},\ldots,b\}$, and
furthermore, $F =
L_i^{(m_{ij}-(m_{ij}-\ell-1))_+-1}R_f^{(m_{if}-\ell)_+}F'$. If
$m_{if}-\ell \geq 0$, then $(m_{ij}-(m_{ij}-\ell-1))_+-1 +
(m_{if}-\ell)_+ = m_{if}$.  If $m_{if} - \ell < 0$, then
$(m_{ij}-(m_{ij}-\ell-1))_+-1 + (m_{if}-\ell)_+ = \ell > m_{if}.$

\noindent {\bf Case 6}: $i < e \leq a$ and $f < j$.

This case is similar to Case 3.

\noindent {\bf Case 7}: $i < e \leq a$ and $f =j$.

In this situation, $F =
L_e^{(m_{ej}-(m_{ij}-\ell-1))_+}R_j^{(m_{ij}-\ell)_+-1}F'$. There
are two possibilities.  If $(m_{ej}-(m_{ij}-\ell-1)) < 0$, then
the sum of the exponents is simply $m_{ij}-\ell -1 > m_{ej}$. On
the other hand, if $(m_{ej}-(m_{ij}-\ell-1)) \geq 0$, the sum of the
exponents is $(m_{ej}-(m_{ij}-\ell-1))_+ + (m_{ij}-\ell)_+-1 =
m_{ej}$, as desired.

\noindent {\bf Case 8}: $i < e \leq a$ and $f > j$.

We have $F =
L_e^{(m_{ej}-(m_{ij}-\ell-1))_+}R_f^{(m_{if}-\ell)_+}F'$. We need
to consider four possibilities:
\begin{enumerate}
\item[$(a)$] $(m_{ej}-(m_{ij}-\ell-1)) \geq 0$
and $(m_{if}-\ell) \geq 0$.  In this case, the exponents sum to
$m_{ej}+m_{if}-m_{ij} +1 \geq m_{ef}$ by Lemma \ref{bounds};
\item[$(b)$] $(m_{ej}-(m_{ij}-\ell-1)) \geq 0$
and $(m_{if}-\ell) < 0$.  In this case, the exponents sum to
$m_{ej} - m_{ij} + \ell+1 > m_{ej} +m_{if}-m_{ij}+1 \geq m_{ef}$
by Lemma \ref{bounds};
\item[$(c)$]  $(m_{ej}-(m_{ij}-\ell-1)) < 0$
and $(m_{if}-\ell) \geq 0$.  We know that $m_{ef} \leq
m_{ej}+m_{if}-m_{ij}+1$. But $m_{ej}-m_{ij} < - \ell -1$.  So
$m_{ef} < m_{if} - \ell+1-1 = m_{if}- \ell$, as desired.
\item[$(d)$] $(m_{ej}-(m_{ij}-\ell-1)) < 0$
and $(m_{if}-\ell) < 0$.  This case cannot occur.  If it did, then
we would have $0 > (m_{ej}-(m_{ij}-\ell-1)) + (m_{if}-\ell) =
m_{ej}+m_{if}-m_{ij}+1 \geq m_{ef} \geq 0$.
\end{enumerate}

\noindent {\bf Case 9}: $a > e$ and $f <j$.

This case is the same as Case 4.

These nine cases now show that each $F_{\ell}$ with $\ell \in \{0,\ldots,m_i-1\}$ 
is a separator of
$P_i \times Q_j$ of multiplicity $m_{ij}$.  We now demonstrate
that these are the minimal separators.

Let $F$ be any separator of $P_i \times Q_j$ of multiplicity
$m_{ij}$ with $\deg(F) = (c,d)$.  To simplify our notation, set
$m= m_{ij}$. We want to show that $\deg(F) \succeq
(a_{m-1-\ell}-1,b_{\ell}-1)$ for some $\ell \in \{0,\ldots,m-1\}$.
If we can verify this fact, then the explicit separators described
above would form a set of minimal separators of $P_i \times Q_j$
of multiplicity $m$.

So, suppose that for every $\ell$, $(c,d) \not\succeq
(a_{m-1-\ell}-1,b_{\ell}-1)$. Note, however, that $F$ is a
separator of $P_i \times Q_j$ of multiplicity of $m$ of $Y$.  So,
there exists a $t$ such that $(c,d) \succeq (a_{m-1-t}-1,m-1-t)$
by Remark \ref{ptsonaline2} and Theorem \ref{ptsonaline1}.
However, since $(c,d) \not\succeq
(a_{m-1-t}-1,b_t-1)$, this implies that $d < b_t$.  On the other
hand, by Theorem \ref{ptsonaline1}, there exists a $k$ such that
$(c,d) \succeq (k,b_k-1)$ since $F$ is also a separator of $P_i
\times Q_j$ of multiplicity of $m$ of $W$.  We thus have
that $b_k-1 \leq d < b_t -1$, whence $t < k$.

Since $b_k < b_{k-1} < \cdots < b_t$, there exists a $p$ such that
$b_{p+1}-1 \leq d < b_p-1$ with $t \leq p <k$. Now $(c,d) \succeq
(k,b_k-1)$, and $k \geq p+1$ and $d \geq b_{p+1}-1$, so we also
have $(c,d) \succeq (p+1,b_{p+1}-1)$.  However, because $(c,d)
\not\succeq (a_{m-1-(p+1)}-1,b_{p+1}-1)$, we have $c < a_{m-p-2}
-1$.

Consider the line $L_i$, i.e., the degree $(1,0)$ form that
contains $W$.  Then the curves defined by $F$ and $L_i$ meet at
$m_{i1}+m_{i2}+\cdots + (m_{ij}-1) + \cdots + m_{ib} = b_0 -1$
points. By Bezout's Theorem (see Theorem \ref{bezout}), since
$L_i$ is irreducible, and $c\cdot 0 + d\cdot 1 = d < b_p-1 < b_0
-1$, we have that $F = F_0L_i$.  But now consider $F_0$.  This is
a form of degree $(c-1,d)$, and the curve it defines meets $L_i$
at
\[(m_{i1}-1)_++(m_{i2}-1)_++\cdots + (m_{ij}-1-1)_+ + \cdots + (m_{ib}-1)_+ \geq b_1 -1\]
points (the inequality comes from the fact that $(m_{ij}-1-1)_+
\geq [(m_{ij}-1)_+-1$]). By Bezout's Theorem $F_0 = F_1L_i$ because
$(c-1)\cdot 0 + d\cdot 1 = d < b_p-1 < b_1 -1$.  We can continue
this argument until will arrive at $F = F_pL_i^{p+1}$, where
$\deg(F_p) = (c-p-1,d)$.

The form $F_p$ and the degree $(0,1)$ form $R_j$ which contains
$Y$ meet at
\[m_{1j} + m_{2j} + \cdots + (m_{ij}-p-2)_+ +\cdots + m_{aj} \geq  a_0 - p -2\]
points, counting multiplicities.  To see this, note that
$L_i^{p+1}$ already passes through the point $P_i \times Q_j$
$(p+1)$ times. Since $F$ passes through $P_i \times Q_j$ with
multiplicity $m -1$, $F_p$ must pass through $P_i \times Q_j$
$(m-p-2)_+$ times. The inequality comes from the fact that
$(m_{ij}-p-2)_+ \geq m_{ij}-p -2$.

Now since $c < a_{m-p-2}- 1$, we have $c-p-1 < a_{m-p-2}-p-2 <
\cdots < a_0 - p-2$.  So, by Bezout's Theorem (Theorem
\ref{bezout}), we get $F_p = G_0R_j$.  But then $G_0$ has degree
$(c-p-1,d-1)$, and we can repeat the above argument to show that
$G_0 = G_1R_j$.  Continuing in this fashion, we arrive at $F_p =
G_{m-p-2}R_j^{m-p-1}$.

We therefore have $F = L_i^{p+1}R_j^{m-p-1}G_{m-p-2}$ where
$G_{m-p-2}$ has degree $(c-p-1,d-m+p+1)$. The exponents of $L_i$
and $R_j$ sum to $m$, which means that $F \in (L_i,R_j)^{m}$,
which contradicts the fact that a separator of $P_i \times Q_j$ of
multiplicity $m = m_{ij}$ belongs to $(L_i,R_j)^{m-1} \setminus
(L_i,R_j)^{m}$.
Thus, there cannot be a separator $F$ with degree $(c,d)
\not\succeq (a_{m-1-\ell}-1,b_{\ell}-1)$ for all $\ell$.
\end{proof}

\begin{application}  We sketch out how one might use
Theorem \ref{degP1xP1} to compute some Hilbert functions.
When $Z$ is an ACM fat point scheme in $\popo$,
then $H_Z(i,j)$ can be computed for all $(i,j)$ directly
from the set $\Ssz$ introduced in Section 2 (see \cite{Gu2} for
complete details).  If we pick any fat point $P_i \times Q_j$
in $Z$ of multiplicity $m_{ij}$, then by Theorem \ref{degP1xP1},
we can compute
\[\deg_Z(P_i\times Q_j) = ((c_1,d_1),\ldots,(c_{m_{ij}},d_{m_{ij}})).\]
Let $Z'$ be the scheme formed by reducing the multiplicity of $P_i
\times Q_j$ by one.  As shown in \cite[Corollary 4.4]{GVT3}, we
can compute the Hilbert function of $Z'$ as follows:
\[
H_{Z'}(r,s) =  H_Z(r,s) - |\{(c,d)\in \deg(P_i\times Q_j)|
(c,d)\preceq(r,s)\}|.
\]
In other words, if $Z'$ is any fat point scheme (possibly
not ACM) which has the
property that if we increase the multiplicity of one of its points
by one to get an ACM scheme, then the Hilbert
function of $Z'$ can be computed directly from numerical
information describing $Z'$.
\end{application}



\begin{thebibliography}{99}
\bibitem{ABM} S. Abrescia, L. Bazzotti, L. Marino,
Conductor degree and Socle Degree.
Matematiche (Catania)  {\bf 56}  (2001),  no. 1, 129--148 (2003).

\bibitem {B} L. Bazzotti,
Sets of Points and their Conductor,
J. Algebra {\bf 283} (2005), no. 2, 799--820.

\bibitem{BC}  L. Bazzotti, M. Casanellas,
Separators of points on algebraic surfaces.
J. Pure Appl. Algebra {\bf 207} (2006), no. 2, 319--326.

\bibitem {GKR} A. V. Geramita, M. Kreuzer, and L. Robbiano,
Cayley-Bacharach Schemes and their canonical modules.
Trans. Amer. Math. Soc. {\bf 339} (1993), no. 1, 163--189.

\bibitem {Gu2} E. Guardo,
Fat Points schemes on a smooth quadric.
J. Pure Appl. Algebra {\bf 162} (2001), no. 2-3, 183--208.

\bibitem{GMVT} E. Guardo, L. Marino, A. Van Tuyl,
Separators of fat points in  $\pr^n$. (2009)
Preprint.  {\tt arXiv:0902.3030v2}

\bibitem{GV1} E. Guardo, A. Van Tuyl,
Fat points in $\popo$ and their Hilbert functions. Canad. J. Math.
{\bf 56} (2004) 716--741.

\bibitem{GVT1} E. Guardo, A. Van Tuyl,
ACM sets of points in multiprojective spaces.
Collect. Math., {\bf 59} (2008), no. 2, 191--213.

\bibitem{GVT2} E. Guardo, A. Van Tuyl,
Separators of points in a multiprojective space.
Manuscripta Math.  {\bf 126} (2008), no. 1, 99--113.

\bibitem{GVT3} E. Guardo, A. Van Tuyl,
Separators of fat points in $\mathbb{P}^n \times \mathbb{P}^m$. (2009)
Preprint.  {\tt arXiv:0906.1391v2}

\bibitem{K} M. Kreuzer,
On the canonical module of a 0-dimensional scheme.
Can. J. Math. {\bf 46} (1994), no. 2, 357--379.

\bibitem{M1} L. Marino,
Conductor and separating degrees for sets of points in
$\pr^r$ and in $\pr^1 \times \pr^1$.
Boll. Unione Mat. Ital. Sez. B Artic. Ric. Mat. (8)  {\bf 9}  (2006),  no. 2, 397--421.

\bibitem {O} F. Orecchia,
Points in generic position and conductors of curves with ordinary singularities.
J. Lond. Math. Soc. (2) {\bf 24} (1981), no. 1, 85--96.

\bibitem{So} A. Sodhi,
The conductor of points having the Hilbert function of a complete intersection
in $\pr^2$.
Canad. J. Math.  {\bf 44}  (1992),  no. 1, 167--179.

\end{thebibliography}
\end{document}